\newcommand{\heute}{14 August 2008}
\theoremstyle{plain}
\newtheorem{theorem}{Theorem}[section]
\newtheorem{lemma}[theorem]{Lemma}
\newtheorem{corollary}[theorem]{Corollary}
\newtheorem{conjecture}[theorem]{Conjecture}
\theoremstyle{remark}
\newtheorem{remark}[theorem]{Remark}
\newtheorem*{defn}{Definition}
\newtheorem*{rk}{Remark}
\newcommand{\dashTwo}[1]{\textup{(\ref{two}${}'$)}}
\newcommand{\ignore}[1]{}
\newcommand{\f}[1][p]{\mathbb{F}_{#1}}
\newcommand{\Gro}[1]{Gr\"ob\-ner}
\DeclareMathOperator{\rank}{rank}
\newcommand{\GL}{\mathit{GL}}
\DeclareMathSymbol\normal{\mathrel}{AMSa}{"43}
\newcommand{\abs}[1]{\left|#1\right|}
\begin{document}

\title{On Oliver's $p$-group conjecture}
\author[D.~J. Green]{David J. Green}
\address{Dept of Mathematics \\
Friedrich-Schiller-Universit\"at Jena \\ 07737 Jena \\ Germany}
\email{green@minet.uni-jena.de}
\author[L.~H\'ethelyi]{L\'aszl\'o H\'ethelyi}
\address{Department of Algebra \\
Budapest University of Technology and Economics \\ Budapest \\ Hungary}
\author[M.~Lilienthal]{Markus Lilienthal}
\address{Dept of Mathematics \\
Friedrich-Schiller-Universit\"at Jena \\ 07737 Jena \\ Germany}
\curraddr{%
Professur f\"ur Electronic Commerce \\
FB Wirtschaftswissenschaften \\
Johann Wolfgang Goethe-Universit\"at \\
Postfach 84 \\
Mertonstr.\@ 17-25 \\
D-60054 Frankfurt \\ Germany}
\keywords{$p$-groups; characteristic subgroups; Thompson subgroup;
$p$-local finite groups; replacement theorem}
\subjclass[2000]{Primary 20D15}
\date{\heute}

\begin{abstract}
\noindent
Let $S$ be a $p$-group for an odd prime~$p$.
B.~Oliver conjectures that a certain characteristic subgroup $\mathfrak{X}(S)$
always contains the Thompson subgroup~$J(S)$.
We obtain a reformulation of the conjecture as a statement
about modular representations of $p$-groups.
Using this we verify Oliver's conjecture for groups where
$S/\mathfrak{X}(S)$ has nilpotence class at most two.
\end{abstract}

\maketitle

\section{Introduction}
\label{sect:intro}
\noindent
The recently introduced concept of a $p$-local finite group seeks
to provide a treatment of the $p$-local structure of a finite
group~$G$ which does not refer directly to the group~$G$ itself and yet retains
enough information to construct the $p$-localisation of the classifying
space~$\mathit{BG}$\@. Ideally one could then associate a $p$-local
classifying space to a $p$-block of~$G$, and to certain exotic fusion systems.
See the survey article~\cite{BLO:survey} by Broto, Levi and Oliver for an
introduction to this area.

A key open question about $p$-local finite groups is whether or not there is
a unique centric linking system associated to each saturated fusion system.
Oliver showed that this would follow from a conjecture about higher limits
(Conjecture 2.2 in~\cite{Oliver:MartinoPriddyOdd}); and that for
odd primes this higher
limits conjecture would in turn
follow from the following purely group-theoretic conjecture:
\medskip

\paragraph{\textbf{Oliver's Conjecture 3.9}} (\cite{Oliver:MartinoPriddyOdd})
\quad
Let $S$ be a $p$-group for an odd prime~$p$. Then
\[
J(S) \leq \mathfrak{X}(S) \, ,
\]
where $J(S)$ is the Thompson subgroup generated
by all elementary abelian $p$-subgroups whose rank is the $p$-rank of~$S$,
and $\mathfrak{X}(S)$ is the Oliver subgroup described
in~\S\ref{sect:manageable}\@.
\medskip

\noindent
Our main result on Oliver's conjecture is as follows:

\begin{theorem}
\label{thm:partial}
Let $S$ be a $p$-group for an odd prime~$p$. If $S/\mathfrak{X}(S)$ has
nilpotency class at most two, then $S$ satisfies Oliver's conjecture.
\end{theorem}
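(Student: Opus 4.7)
The plan is to first reduce Oliver's conjecture to a purely representation-theoretic statement about the pair $(Q,V)$, where $Q = S/\mathfrak{X}(S)$ acts on an $\mathbb{F}_p$-module $V$ built from the canonical filtration defining $\mathfrak{X}(S)$. One expects an equivalence of the form: Oliver's conjecture holds for $S$ precisely when no nontrivial elementary abelian subgroup of $Q$ acts as a Thompson-style offender on $V$. Concretely, if $E \leq S$ is elementary abelian of maximum rank and $E \not\leq \mathfrak{X}(S)$, then the image $\bar{E} \leq Q$ is nontrivial and should be shown to violate the reformulated condition; the job is to rule this out under the hypothesis that $Q$ has nilpotency class at most two.

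Assuming the reformulation is in place, I would argue by contradiction, fixing an $E$ of maximum rank with $\bar{E} \neq 1$ in $Q$. Because $Q$ is nilpotent of class at most two, $Q' \leq Z(Q)$ and the commutator map descends to a bilinear pairing $Q/Z(Q) \times Q/Z(Q) \to Q'$. This has two useful consequences on the $\mathbb{F}_p Q$-module $V$: first, for $\bar{e} \in \bar{E}$ the endomorphism $\bar{e} - 1$ is nilpotent of small index (bounded in terms of $p$); second, the commutator $(\bar{e}-1)(\bar{f}-1) - (\bar{f}-1)(\bar{e}-1)$ of such nilpotent endomorphisms is governed by the action of $[\bar{e},\bar{f}] - 1$, an operator attached to the centre of $Q$. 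In short, the action of $\bar{E}$ on $V$ can be analysed almost as if $\bar{E}$ were abelian, with a controlled central correction.

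Next I would combine this near-commutativity with a replacement argument in the spirit of Thompson and Timmesfeld. The goal is to produce from an assumed offender $\bar{E}$ either a proper subgroup of $\bar{E}$ that is still an offender, or a new elementary abelian subgroup of $S$ whose rank strictly exceeds that of $E$, contradicting maximality. The class-two hypothesis is exactly what permits the exchange move to go through: one may replace a generator of $\bar{E}$ by its product with an element whose commutator is central, without losing either the elementary abelian property or the offender inequality, and then iterate to drive $\bar{E}$ into $\mathfrak{X}(S)$.

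The step I expect to carry the main technical weight is coordinating the bilinear pairing on $Q/Z(Q)$ with the Jordan structure of the operators $\bar{e} - 1$ on $V$ simultaneously for all $\bar{e} \in \bar{E}$. Oliver's filtration of $\mathfrak{X}(S)$ can be long, so $V$ may have many composition factors; the non-trivial content is to show that the constraints imposed by (i) the class-two hypothesis, (ii) the existence of an offender, and (iii) the origin of $V$ from the defining filtration of $\mathfrak{X}(S)$ are jointly incompatible. Once this incompatibility is established, the reformulation in the first paragraph yields $J(S) \leq \mathfrak{X}(S)$ as required.
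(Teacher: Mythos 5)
Your overall skeleton matches the paper's: a reformulation of Oliver's conjecture as a statement about offenders on the module $V=\Omega_1(Z(\mathfrak{X}(S)))$ for $G=S/\mathfrak{X}(S)$ (the paper's Theorem~\ref{thm:main}, where the relevant extra property is that every central order-$p$ element of $G$ acts on $V$ with minimal polynomial $(X-1)^p$), followed by a Thompson--Timmesfeld replacement argument. But there is a genuine gap exactly where you place ``the main technical weight'': you never identify the invariant that links the replacement theorem to the class-two hypothesis, namely \emph{quadratic elements} ($g\neq 1$ with $(g-1)^2V=0$). The replacement argument (Theorem~\ref{thm:serious}) proves that an offender must \emph{contain a quadratic element}; it does not shrink an offender inside $Q$ or enlarge $E$ in the way you describe, and your proposed exchange move --- replacing a generator of $\bar E$ by its product with an element whose commutator is central --- is not the replacement step. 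In Lemma~\ref{lemma:Replacement} one replaces a pair $(H,W)$ by $(K,U)$ with $K<H$ a subgroup cut out by a congruence condition and $U$ a strictly larger fixed subspace, and the whole induction only runs under the hypothesis that \emph{no} element of $H$ is quadratic. So after the reformulation the entire burden is to show that $G$ contains no quadratic element at all.

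That is precisely the step your sketch leaves open with ``once this incompatibility is established.'' The paper closes it with a concrete matrix identity (Lemma~\ref{lemma:Class2}): if $C=[A,B]$ is a nontrivial element of $C_G(A,B)$ and $B$ were quadratic, then writing $\alpha,\beta,\gamma$ for the actions of $A-1$, $B-1$, $C-1$ one manipulates the relation $\alpha\beta-\beta\alpha=\gamma(1+\beta+\alpha+\beta\alpha)$ to obtain $\gamma\beta=\lambda(\gamma\beta\,\alpha+\alpha\,\gamma\beta)$ with $\lambda=-\tfrac12$ (this is where $p$ odd enters), whence nilpotence of $\alpha$ forces $\gamma\beta=0$ and then $\gamma^2=0$. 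Combined with the easy reduction from $Z(G)$ to $\Omega_1(Z(G))$, this shows that a quadratic element of a class-two group forces a quadratic element in $\Omega_1(Z(G))$, contradicting (PS) since for odd $p$ the minimal polynomial $(X-1)^p$ has degree at least $3$. Your ``near-commutativity with a controlled central correction'' points in the right direction but is not carried out, and without this computation (or a substitute for it) the argument does not close.
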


\begin{rk}
This subsumes all three cases of Oliver's Proposition~3.7 in the first case
$\mathfrak{X}(S) \geq J(S)$.
\end{rk}

\noindent
The proof of Theorem~\ref{thm:partial} depends on a reformulation of
Oliver's conjecture, for which we need to recall the terms $F$-module and
offender. See e.g.~\cite{MeierfStellm:OtherPGV} for a recent paper about
offenders.

\begin{defn}[Definition~26.5 in~\cite{RevisionVol2}]
Let $G$ be a finite group and $V$ a faithful $\f G$-module. If there exists
a non-identity elementary abelian $p$-subgroup
$E \leq G$ which satisfies the inequality $\abs{E} \abs{C_V(E)} \geq \abs{V}$,
then $V$ is called an \emph{$F$-module} for~$G$, and $E$ an
\emph{offending subgroup}.
\end{defn}

\begin{rk}
$F$-module is short for ``failure of (Thompson) factorization module''.
Another way to phrase the inequality is $\dim(V) - \dim(V^E) \leq \rank(E)$.
\end{rk}

\noindent
We will always take $G$ to be a nontrivial $p$-group.  Hence the
$\f G$-module $V$ is
faithful if and only if it is faithful as a module for $\Omega_1(Z(G))$. 
We shall be interested in the following stronger condition:
\begin{description}
\item[(PS)]
The restriction of $V$ to each central order~$p$ subgroup has a
nontrivial projective summand.
\end{description}

\begin{rk}
Projective and free are equivalent here. We are grateful to the referee for
suggesting this formulation of the property. Another formulation is
that every central order~$p$ element operates with minimal polynomial
$(X-1)^p$: equivalence follows from the standard properties of the Jordan
normal form.
\end{rk}


\begin{theorem}
\label{thm:main}
Let $G\neq 1$ be a finite $p$-group.
Then Oliver's conjecture holds for every
finite $p$-group $S$ with $S/\mathfrak{X}(S) \cong G$ if and only if
$G$~has no $F$-modules satisfying~(PS)\@.
\end{theorem}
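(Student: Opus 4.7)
I prove each implication by contrapositive, leveraging Oliver's definition of $\mathfrak{X}(S)$ as the largest normal subgroup admitting a characteristic ascending chain
$1 = Q_0 < Q_1 < \cdots < Q_n = \mathfrak{X}(S)$
whose elementary abelian quotients $V_i := Q_i/Q_{i-1}$ are central in $S/Q_{i-1}$ and satisfy Oliver's manageability condition. Setting $G := S/\mathfrak{X}(S)$, each $V_i$ is naturally an $\f G$-module. The heart of the proof is a dictionary translating Oliver's manageability condition at stage $i$ into property (PS) for the $\f G$-module $V_i$.

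\emph{Necessity direction.} Given an F-module $V$ for $G$ with offender $E$ and satisfying (PS), I form the split extension $S := V \rtimes G$, with $V$ as the underlying elementary abelian $p$-group. The inclusion $V \leq \mathfrak{X}(S)$ is immediate from the length-one series $1 < V$. The dictionary identifies (PS) with the statement that each central order-$p$ element of $G$ acts on $V$ with minimal polynomial $(X-1)^p$, which is precisely the obstruction to extending the $Q$-series past $V$; hence $\mathfrak{X}(S) = V$. The offender $E$ lifts inside the semidirect product to the elementary abelian subgroup $\tilde E := C_V(E) \times E$, of $p$-rank $\rank(E) + \dim C_V(E) \geq \dim V = \rank(V)$ by the offender inequality. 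Since $V$ realises $p$-rank $\dim V$ while $\tilde E \not\leq V$, every maximal-rank elementary abelian of $S$ fails to lie in $V = \mathfrak{X}(S)$, so $J(S) \not\leq \mathfrak{X}(S)$.

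\emph{Sufficiency direction.} Assume $S/\mathfrak{X}(S) \cong G$ and $J(S) \not\leq \mathfrak{X}(S)$. Choose an elementary abelian $A \leq S$ of maximal $p$-rank with $A \not\leq \mathfrak{X}(S)$ and set $E := A\mathfrak{X}(S)/\mathfrak{X}(S) \neq 1$. Since $A \cap \mathfrak{X}(S)$ acts trivially on every $V_i$, the image $E$ acts on each. The dictionary hands (PS) to each $V_i$ as a $\f G$-module; it remains to find an $i$ with $V_i$ an F-module and $E$ an offender. I argue by replacement: if $\rank(E) + \dim C_{V_i}(E) < \dim V_i$ held for every $i$, then iteratively lifting $C_{V_i}(E)$ up the $Q$-series and adjusting each lift to commute with $A$, using the centrality of $V_i$ in $S/Q_{i-1}$, would assemble an elementary abelian subgroup of $S$ strictly exceeding $\rank(A)$, contradicting maximality.

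\emph{Main obstacle.} The crux is the dictionary itself: interpreting Oliver's manageability condition at each stage of the $Q$-series in terms of minimal polynomials of central order-$p$ elements of $G$ acting on $V_i$, thereby yielding an honest equivalence with (PS). This equivalence is needed in both directions, to pin down $\mathfrak{X}(V \rtimes G) = V$ in necessity and to extract (PS) for an arbitrary section $V_i$ in sufficiency. The replacement construction, iterating up the $Q$-series and compatibilising lifts so they commute with $A$ and with one another to form a single elementary abelian over-extension of $A$, is the second delicate ingredient of the argument.
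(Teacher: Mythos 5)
Your necessity direction is essentially the paper's argument (form $S=V\rtimes G$, show $\mathfrak{X}(S)=V$ using that central order-$p$ elements act with minimal polynomial $(X-1)^p$, and observe that $C_V(E)\times E$ has rank at least $\dim V$), though two points are glossed over: to pin down $\mathfrak{X}(S)=V$ you must first show $V$ is maximal normal abelian and centric in $S$ and invoke Oliver's lemma to arrange a $Q$-series through $V$; and your claim that \emph{every} maximal-rank elementary abelian avoids $V$ is false when $\rank(C_V(E)\times E)=\dim V$ (then $V$ itself may have maximal rank) --- what you actually get, and all you need, is that \emph{some} maximal-rank elementary abelian is not contained in $\mathfrak{X}(S)$.

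The sufficiency direction, however, has a genuine gap, and it starts with your ``dictionary.'' Oliver's $Q$-series $1=Q_0\leq\cdots\leq Q_n=\mathfrak{X}(S)$ only requires each $Q_i\normal S$ and $[\Omega_1(C_S(Q_{i-1})),Q_i;p-1]=1$; the quotients $Q_i/Q_{i-1}$ need not be elementary abelian, need not be central in $S/Q_{i-1}$, and are not naturally $\f G$-modules for $G=S/\mathfrak{X}(S)$ (there is no reason for $\mathfrak{X}(S)$ to centralise them). So the objects $V_i$ on which your entire sufficiency argument rests do not exist in the form you assume, and there is no statement identifying (PS) for them with the defining condition of the $Q$-series. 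The maximality of $\mathfrak{X}(S)$ yields (PS) for exactly one module, namely $V:=\Omega_1(Z(\mathfrak{X}(S)))$: for $1\neq g\mathfrak{X}(S)\in\Omega_1(Z(G))$, the subgroup $\langle\mathfrak{X}(S),g\rangle$ is normal, so $[V,g;p-1]\neq1$, forcing minimal polynomial $(X-1)^p$. Working with this single module also makes your ``replacement/assembly'' step unnecessary: given elementary abelian $E\not\leq\mathfrak{X}(S)$, write $E=E_1\times E_2\times E_3$ with $E_1=E\cap V$ and $E_1\times E_2=E\cap\mathfrak{X}(S)$; since $E_2\leq\mathfrak{X}(S)$ centralises $V$, one has $V^E=V^{E_3}$, and the no-offender hypothesis applied to $E_3\leq G$ gives $\dim V>\dim V^{E_3}+\rank E_3\geq\rank E_1+\rank E_3$, so the elementary abelian subgroup $VE_2\leq\mathfrak{X}(S)$ already beats $\rank E$. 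Your proposed iterative lifting of the $C_{V_i}(E)$ up the series, ``adjusted to commute with $A$,'' is not backed by any construction and would face serious obstructions (the lifts need not commute with one another, and the layers need not even be abelian); as written it cannot be completed.
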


\begin{conjecture}
\label{conj:GHL}
Let $p$ be an odd prime and $G \neq 1$ a finite $p$-group. Then
$G$ has no $F$-modules which satisfy~(PS)\@.
\end{conjecture}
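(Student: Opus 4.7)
The plan is to attack Conjecture~\ref{conj:GHL} by a minimal counterexample argument: assume $(G,V)$ is such that $G$ is a nontrivial $p$-group of minimal order admitting a faithful $F$-module $V$ satisfying~(PS), let $E \leq G$ be an offending elementary abelian subgroup, and write $Z := \Omega_1(Z(G))$ for the central socle. Since Theorem~\ref{thm:main} makes Conjecture~\ref{conj:GHL} equivalent to Oliver's conjecture in full, no quick proof is to be expected; what follows is the line I regard as most promising. The first step is to convert~(PS) into dimension estimates: for each order-$p$ element $z \in Z$ one has
\[
\dim V - \dim V^{\langle z \rangle} \;\geq\; p - 1,
\]
since the minimal polynomial of $z$ on $V$ is $(X-1)^p$. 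I would then upgrade this to $\dim V - \dim V^Z \geq (p-1)\rank(Z)$ by showing that the restriction $V|_Z$ contains a free $\f Z$-summand.

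In the easy case $Z \leq E$, the offender inequality $\dim V - \dim V^E \leq \rank(E)$ combined with the bound above immediately yields $\rank(E) \geq (p-1)\rank(Z)$, after which one may try to strip a central generator $z \in Z$ and invoke minimality of $G$. In the harder case $Z \not\leq E$, I would pick $z \in Z$ of order $p$, form $E' := E\langle z\rangle$ of rank $\rank(E)+1$, and exploit
\[
\dim V - \dim V^{E'} \;\geq\; \max\bigl(\dim V - \dim V^E,\, \dim V - \dim V^{\langle z\rangle}\bigr)
\]
together with a Timmesfeld-type replacement, passing to a best offender so as to gain further control of the action on the Jordan blocks of $z$. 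The oddness of $p$ enters here through $p-1 \geq 2$, which is precisely what provides a genuine dimension gain over the trivial count.

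The main obstacle is that~(PS) is \emph{not} inherited in any obvious way by the quotient $G/\langle z\rangle$ acting on $V/(z-1)V$, nor by the restriction of $V$ to a proper subgroup $H < G$ containing $z$, so the naive induction on $\abs{G}$ collapses at exactly this point. Bridging the gap requires a substantial new idea: in the class-two case of Theorem~\ref{thm:partial} the commutator structure of $G$ is rigid enough to preserve~(PS) through the relevant filtrations, but for $G$ of nilpotency class three or more a genuinely new representation-theoretic input is needed in order to control the interaction of $E$ with the full lower central series of $G$. I would expect this to be the principal battleground on which Conjecture~\ref{conj:GHL} will ultimately be won or lost.
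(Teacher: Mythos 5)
The statement you have been asked to prove is, in the paper, exactly what its label says: a conjecture. The authors do not prove it; they establish it only under the additional hypothesis that $G$ has nilpotence class at most two (Corollary~\ref{coroll:manClass2}), and they show it is equivalent to Oliver's Conjecture~3.9 in full generality (Corollary~\ref{coroll:main}), so a short complete proof is not to be expected. Your proposal is likewise not a proof --- you say so yourself at the end --- so the verdict is a genuine gap: the entire second half of the argument is missing. Beyond that, one concrete step in your sketch is already unjustified. The claimed upgrade from $\dim V - \dim V^{\langle z\rangle} \geq p-1$ to $\dim V - \dim V^{Z} \geq (p-1)\rank(Z)$ via a free $\f Z$-summand does not follow from~(PS): that condition only constrains the restriction of $V$ to each cyclic \emph{central} subgroup of order~$p$, and a module over an elementary abelian $p$-group can be free on restriction to every honest cyclic subgroup without containing a free summand (this is precisely why Dade's lemma must be phrased in terms of shifted cyclic subgroups). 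So even the ``easy case'' $Z \leq E$ of your plan is not secured, and the minimal-counterexample induction collapses for the reason you yourself identify.

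It is also worth recording that the paper's partial progress runs along a different axis from your dimension counts. The lever there is quadratic elements: Theorem~\ref{thm:serious}, proved via the Replacement Theorem (Lemma~\ref{lemma:Replacement} and Corollary~\ref{coroll:Replacement}), shows that for odd $p$ any offender must contain a quadratic element, while (PS) forbids quadratic elements in $\Omega_1(Z(G))$ because a minimal polynomial $(X-1)^p$ with $p \geq 3$ forces $(z-1)^2 \neq 0$. Theorem~\ref{thm:Class2} then transports non-quadraticity from the centre to all of $G$ when the class is at most two, which eliminates all offenders. Your plan never engages with quadratic elements; conversely, the paper's \S\ref{section:class3} exhibits a class-three module for $C_3 \wr C_3$ satisfying~(PS) that does contain quadratic elements, so the paper's own lever breaks at class three. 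What you have written is a reasonable research programme with an acknowledged missing central idea, not a proof of the conjecture --- which, as of this paper, remains open.
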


\begin{corollary}
\label{coroll:main}
Conjecture~\textnormal{\ref{conj:GHL}} is equivalent to Oliver's
Conjecture~\textnormal{3.9}\@.
\end{corollary}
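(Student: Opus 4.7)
The plan is to deduce Corollary~\ref{coroll:main} directly from Theorem~\ref{thm:main} by a two-way universal quantification, with minor bookkeeping for the trivial case $\mathfrak{X}(S)=S$. Theorem~\ref{thm:main} already does all the substantive work: for each fixed nontrivial $p$-group~$G$ it translates the representation-theoretic statement ``$G$ has no $F$-module satisfying~(PS)'' into the group-theoretic statement ``Oliver's conjecture holds for every $S$ with $S/\mathfrak{X}(S)\cong G$''. Corollary~\ref{coroll:main} just records that taking a conjunction over all such~$G$ on one side matches taking a conjunction over all $S$ on the other.

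For the implication Conjecture~\ref{conj:GHL} $\Rightarrow$ Oliver's Conjecture~3.9, I would argue as follows. Let $p$ be odd and $S$ an arbitrary finite $p$-group. If $\mathfrak{X}(S)=S$, then $J(S)\leq S=\mathfrak{X}(S)$ and there is nothing to prove. Otherwise set $G=S/\mathfrak{X}(S)$, a nontrivial finite $p$-group. By Conjecture~\ref{conj:GHL}, $G$ has no $F$-modules satisfying~(PS), and Theorem~\ref{thm:main} then yields Oliver's conjecture for every $S'$ with $S'/\mathfrak{X}(S')\cong G$; in particular for our chosen~$S$.

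For the converse, assume Oliver's Conjecture~3.9 holds for every $p$-group at every odd prime. Fix odd $p$ and a nontrivial finite $p$-group~$G$. Then Oliver's conjecture holds, in particular, for every $S$ with $S/\mathfrak{X}(S)\cong G$ (the statement is vacuously true if no such $S$ exists, which is harmless). Theorem~\ref{thm:main} then forces $G$ to have no $F$-modules satisfying~(PS), which is precisely Conjecture~\ref{conj:GHL}.

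Essentially no obstacle arises once Theorem~\ref{thm:main} is available; the only point requiring any care is the degenerate case $G=1$, which is excluded from Conjecture~\ref{conj:GHL} but disposed of by the trivial containment $J(S)\leq S=\mathfrak{X}(S)$. The real mathematical content lies in establishing Theorem~\ref{thm:main}; the corollary itself is a formal quantifier manipulation.
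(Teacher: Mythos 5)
Your proposal is correct and matches the paper's own argument: the corollary is deduced directly from Theorem~\ref{thm:main} as a quantifier manipulation, with the degenerate case $\mathfrak{X}(S)=S$ handled by the trivial containment $J(S)\leq S=\mathfrak{X}(S)$. The paper's proof is just a one-line version of exactly this.
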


\noindent
We prove Theorem~\ref{thm:partial} by verifying Conjecture~\ref{conj:GHL}
for groups of class at most two. For this we need the following result.

\begin{defn}[See~\cite{Glauberman:Quadratic}]
Let $V$ be a faithful $\f G$-module. A non-identity element $g \in G$ is
called \emph{quadratic} if $(g-1)^2V = 0$.
\end{defn}

\begin{theorem}
\label{thm:Class2}
Suppose that $p$~is an odd prime, $G$ is a $p$-group of nilpotence class at most
two, and $V$ is a faithful $\f G$-module. If $G$ contains a quadratic element,
then so does $\Omega_1(Z(G))$.
\end{theorem}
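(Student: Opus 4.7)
The plan is to produce the required quadratic element of $\Omega_1(Z(G))$ as the commutator $z:=[g,h]$ for a suitable $h\in G$. First I reduce to the case $g\notin Z(G)$: if $g$ is already central, the binomial expansion together with $(g-1)^2=0$ and $p\mid\binom{p}{i}$ for $1\le i<p$ forces $g^p$ to act trivially on $V$, so by faithfulness $g\in\Omega_1(Z(G))$ and we are done. Assuming $g\notin Z(G)$, pick $h\in G$ with $z:=[g,h]\neq 1$. Nilpotence class two places $z$ in $Z(G)$, and bilinearity of the commutator in class-two groups gives $z^p=[g^p,h]=1$, so $z\in\Omega_1(Z(G))$. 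It therefore suffices to show that $(z-1)^2$ annihilates $V$.

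Write $X:=g-1$ and $u:=z-1$ as operators on $V$. Since $g^h=gz$ is $G$-conjugate to $g$ it is again quadratic, so $(gz-1)^2=0$; expanding this using $g^2=2g-1$ and the centrality of $z$ produces the preliminary identity
\[
u^2 \;=\; -2Xu \quad\text{on }V.
\]
The crux of the argument is a conjugation calculation: from $h^{-1}gh=gz$ one obtains $h^{-1}Xh = X+u+Xu$, and $h^{-1}uh=u$ since $z$ is central, so
\[
h^{-1}(Xu)h \;=\; (X+u+Xu)\,u \;=\; Xu + u^2 + Xu^2 \;=\; Xu - 2Xu + 0 \;=\; -Xu,
\]
where I have used $u^2=-2Xu$ and $Xu^2 = -2X^2u = 0$. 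An easy induction then yields $h^{-k}(Xu)h^k = (-1)^k\,Xu$ for every $k\ge 1$.

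Finally I take $k=\lvert h\rvert$. This order is a power of $p$ and hence odd, so $(-1)^k=-1$; on the other hand $h^k=1$ forces $h^{-k}(Xu)h^k = Xu$. Thus $Xu=-Xu$, i.e.\ $2Xu=0$, and since $p$ is odd this gives $Xu=0$. Plugging back, $u^2=-2Xu=0$, so $(z-1)^2V=0$ and $z$ is a quadratic element of $\Omega_1(Z(G))$, as required. The main obstacle I foresee is spotting the sign-flip identity $h^{-1}(Xu)h=-Xu$: without it one only obtains the weaker relation $(z-1)^3V=0$ from routine manipulations, which is not enough. The sign flip is the step that converts the class-two relation $gh=hgz$ into a statement that the oddness of $p$ can exploit.
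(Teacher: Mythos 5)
Your argument is correct, and at the top level it follows the same strategy as the paper: the quadratic element of $\Omega_1(Z(G))$ is produced as the commutator $z=[g,h]$, with faithfulness giving $g^p=1$ (hence $z^p=1$) and class two placing $z$ in the centre --- this is exactly the contrapositive of the paper's Lemma~\ref{lemma:Class2}. The computational core, however, is genuinely different. Translating into the paper's notation ($\alpha=h-1$, $\beta=X$, $\gamma=u$, $\delta=\gamma\beta=Xu$), your sign-flip identity $h^{-1}(Xu)h=-Xu$ is precisely the paper's relation $\gamma(2\beta+\beta\alpha+\alpha\beta)=0$; but you reach it more cleanly, from the single observation that the conjugate $g^h=gz$ is again quadratic, whereas the paper extracts it from the expanded group relation $AB=BAC$ together with the auxiliary computation $\gamma\beta\alpha\beta=0$. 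More significantly, the two proofs exploit this identity differently to force $Xu=0$: the paper rewrites it as $\delta=-\tfrac12(\delta\alpha+\alpha\delta)$, iterates via a binomial expansion, and invokes the nilpotence of $\alpha$; you iterate the conjugation automorphism $|h|$ times and use that $|h|$ is an odd power of $p$, so that $Xu=(-1)^{|h|}Xu=-Xu$. Both routes need $p$ odd in order to invert $2$; yours additionally uses $(-1)^{p^m}=-1$ and is shorter and arguably more transparent. Two small presentational points: the step $z^p=[g^p,h]=1$ silently uses $g^p=1$, which follows from faithfulness exactly as in your central case and deserves a word; and $u^2=-2Xu$ only follows from the full expansion $(gz-1)^2=u^2+2Xu+2Xu^2=0$ after one first kills the term $Xu^2$ (multiply through by $X$ and use $X^2=0$), so your order of deduction is slightly circular as written, though harmless.
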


\paragraph{\emph{Structure of the paper}}
We prove Theorem~\ref{thm:main} and Corollary~\ref{coroll:main} in
\S\ref{sect:manageable}\@.
In \S\ref{sect:Replacement} we derive a consequence of the Replacement Theorem,
Theorem~\ref{thm:serious}\@.
Then in~\S\ref{sect:partial} we prove Theorems \ref{thm:Class2}~and
\ref{thm:partial}.
Finally in \S\ref{section:class3}
we discuss a class three example which cannot be handled using
Theorem~\ref{thm:serious}\@.


\section{The reformulation of Oliver's conjecture}
\label{sect:manageable}
\noindent
For the convenience of the reader we start by recapping the definition
and elementary properties of~$\mathfrak{X}(S)$, as given in~\S3
of Oliver's paper~\cite{Oliver:MartinoPriddyOdd}\@.

\begin{defn}[c.f.~\cite{Oliver:MartinoPriddyOdd}, Def.~3.1]
Let $S$ be a $p$-group and $K \normal S$ a normal subgroup. A $Q$-series
leading up to~$K$ consists of a series of subgroups
\[
1 = Q_0 \leq Q_1 \leq \cdots \leq Q_n = K
\]
such that each $Q_i$ is normal in~$S$, and such that
\begin{equation*}
[\Omega_1(C_S(Q_{i-1})),Q_i;p-1] = 1
\end{equation*}
holds for each $1 \leq i \leq n$.
The unique largest normal subgroup of~$S$ which admits such a $Q$-series
is called~$\mathfrak{X}(S)$, the Oliver subgroup of~$S$.
\end{defn}

\begin{lemma}[Oliver]
\label{lemma:oliver}
If $1 = Q_0 \leq Q_1 \leq \cdots \leq Q_n = K$ is such a $Q$ series
and $H \normal G$ also admits a $Q$-series, then there is a $Q$-series
leading up to $HK$ which starts with $Q_0,\ldots,Q_n$\@.

Hence there is indeed a unique largest subgroup admitting a $Q$-series,
and this subgroup $\mathfrak{X}(S)$ is characteristic in~$S$.
In addition, $\mathfrak{X}(S)$ is centric in~$S$: recall that $P \leq S$ is
centric if $C_S(P)=Z(P)$.
\end{lemma}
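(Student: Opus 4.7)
The plan for the concatenation claim is to append to $Q_0,\ldots,Q_n$ the subgroups $Q_{n+i} := K R_i$ for $i = 1,\ldots,m$, where $1 = R_0 \leq \cdots \leq R_m = H$ is the given $Q$-series for $H$. Each $K R_i$ is normal in $S$, and the substantive verification is
\[
[\Omega_1(C_S(K R_{i-1})),\, K R_i;\, p-1] = 1.
\]
Setting $A := \Omega_1(C_S(K R_{i-1}))$ and $T := C_S(K)$, one has $A \leq T$ and $A \leq \Omega_1(C_S(R_{i-1}))$. Because $[A, K] = 1$, the identity $[a, kr] = [a, r]$ yields $[A, K R_i] = [A, R_i]$; and since $T \normal S$, every $[a, r] = a^{-1} a^r$ lies in $T$. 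Inductively each $B_j := [A, K R_i; j]$ lies in $T$ and coincides with $[B_{j-1}, R_i]$, so $B_{p-1} = [A, R_i; p-1] \leq [\Omega_1(C_S(R_{i-1})), R_i; p-1] = 1$ by the $Q$-series hypothesis on the $R_j$.

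Given the concatenation, the family of $Q$-series-admitting normal subgroups of $S$ is closed under products, so the join of all of them is itself the unique maximal member $\mathfrak X(S)$. Characteristicity is then immediate: any $\phi \in \Aut(S)$ carries a $Q$-series to a $Q$-series (the defining commutator condition is automorphism invariant), hence $\phi(\mathfrak X(S))$ also admits a $Q$-series and equals $\mathfrak X(S)$ by maximality.

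For centricity I plan to argue by contradiction: assuming $C := C_S(X) \not\leq X$ for $X := \mathfrak X(S)$, I will extend the given $Q$-series by one further step. Normality of $C$ in $S$ makes $\bar C := CX/X$ a nontrivial normal subgroup of the $p$-group $\bar S := S/X$, so $\bar C \cap Z(\bar S)$ is nontrivial; I pick $\bar c$ of order $p$ in this intersection with a lift $c \in C$, and set $Y := X\langle c\rangle$. Centrality of $\bar c$ in $\bar S$ forces $Y \normal S$. Writing $E := \Omega_1(C)$, since $E$ centralizes $X$ the commutator $[e, xc^i]$ reduces to $[e, c^i]$, and $\bar c \in Z(\bar S)$ combined with $e, c \in C$ forces $[e, c] \in C \cap X = Z(X)$. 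Hence $[E, Y] \leq Z(X)$, and a second step vanishes: for $b \in Z(X) \leq X$, $[b, x c^i] = [b, c^i] = 1$ since $c \in C_S(X)$. Thus $[E, Y; 2] = 1$, which for odd $p$ (where $p - 1 \geq 2$) suffices to extend the $Q$-series, contradicting maximality.

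The step I expect to be most delicate is bookkeeping where iterated commutators live. In the concatenation, each $B_j$ must first be shown to sit inside $C_S(K)$ before the reduction $[B_j, KR_i] = [B_j, R_i]$ can be applied. In the centric argument, the cascade $E \rightsquigarrow Z(X) \rightsquigarrow 1$ under successive commutation with $Y$ relies crucially on the lifted element $c$ centralizing $X$, rather than merely normalizing it.
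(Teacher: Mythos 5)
Your proof is correct and follows essentially the same route as Oliver's original argument (Lemma 3.2 of \cite{Oliver:MartinoPriddyOdd}), which the paper simply cites rather than reproduces: append the cosets $KR_i$ using that iterated commutators of $\Omega_1(C_S(KR_{i-1}))$ with $KR_i$ stay inside $C_S(K)$ and reduce to commutators with $R_i$, and for centricity lift a central order-$p$ element of $C_S(\mathfrak{X}(S))\mathfrak{X}(S)/\mathfrak{X}(S)$ to extend the $Q$-series via $[E,Y]\leq Z(\mathfrak{X}(S))$ and $p-1\geq 2$. The delicate points you flag (keeping the $B_j$ inside $C_S(K)$, and using that $c$ centralizes rather than merely normalizes $\mathfrak{X}(S)$) are exactly the right ones, and your handling of them is sound.
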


\begin{proof}
See pages 334--5 of Oliver's paper~\cite{Oliver:MartinoPriddyOdd}\@.
\end{proof}

\noindent
Now we can start to derive the reformulation of Oliver's conjecture.

\begin{lemma}
\label{lemma:dxxplyRooted}
Let $S$ be a finite $p$-group with $\mathfrak{X}(S) < S$. Then the induced
action of $G := S/\mathfrak{X}(S)$ on $V := \Omega_1(Z(\mathfrak{X}(S)))$
satisfies~(PS)\@.
\end{lemma}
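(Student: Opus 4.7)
The plan is to argue by contradiction, exploiting the maximality of $\mathfrak{X}(S)$ among subgroups of $S$ admitting a $Q$-series. Fix a central order-$p$ element $\bar g \in G$ and suppose, for contradiction, that the restriction of $V$ to $\langle \bar g\rangle$ has no nontrivial free summand. By the Jordan-form remark following the definition of~(PS), this failure is equivalent to the module-theoretic identity $(\bar g - 1)^{p-1}V = 0$.

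Lift $\bar g$ to an element $g \in S$ and set $K := \langle g, \mathfrak{X}(S)\rangle$. Centrality of $\bar g$ in $G$ gives $K \normal S$, while $\bar g \neq 1$ gives $K \supsetneq \mathfrak{X}(S)$. Now take any $Q$-series $1 = Q_0 \leq \cdots \leq Q_n = \mathfrak{X}(S)$ and append $Q_{n+1} := K$; the goal is to verify the $Q$-series condition at level $n+1$, which will contradict the maximality of $\mathfrak{X}(S)$ and so finish the proof.

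Because $\mathfrak{X}(S)$ is centric in $S$ by Lemma~\ref{lemma:oliver}, we have $\Omega_1(C_S(Q_n)) = \Omega_1(Z(\mathfrak{X}(S))) = V$, so the condition to check reduces to $[V, K; p-1] = 1$. The crucial observation is that $V \leq Z(\mathfrak{X}(S))$, so $\mathfrak{X}(S)$ acts trivially on $V$ and the action on $V$ of an arbitrary element $k = x g^{i} \in \mathfrak{X}(S)\langle g\rangle = K$ depends only on~$i$. Writing $V$ additively gives $[v,k] = v(\bar g^{\,i} - 1) \in V(\bar g - 1)$, and a direct induction on $m$ yields
\[
[V, K; m] \;=\; V(\bar g - 1)^m \qquad \text{for every } m \geq 1.
\]
The standing hypothesis makes this vanish at $m = p-1$, supplying the required contradiction.

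The only substantive point is the translation of the iterated group-theoretic commutator $[V, K; m]$ into the module-theoretic expression $V(\bar g - 1)^m$; once this dictionary, which rests entirely on the triviality of the $\mathfrak{X}(S)$-action on $V$, is in place, the argument is a direct confrontation with the maximality of $\mathfrak{X}(S)$. I do not foresee any genuine obstacle beyond this bookkeeping step.
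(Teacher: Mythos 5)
Your proof is correct and follows essentially the same route as the paper's: lift a nontrivial element of $\Omega_1(Z(G))$, note that $K=\langle g,\mathfrak{X}(S)\rangle$ is normal in $S$, and use maximality of $\mathfrak{X}(S)$ to force $[V,g;p-1]\neq 1$, i.e.\ minimal polynomial $(X-1)^p$, which is the stated reformulation of~(PS)\@. You simply make explicit the $Q$-series extension, the use of centricity, and the dictionary $[V,K;m]=V(\bar g-1)^m$ that the paper's one-line appeal to maximality leaves implicit.
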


\begin{proof}
Pick $g \in S$ such that $1 \neq g\mathfrak{X}(S) \in \Omega_1(Z(G))$. Then
$\langle \mathfrak{X}(S),g\rangle \normal S$ and so $[V,g;p-1] \neq 1$,
by maximality of~$\mathfrak{X}(S)$. So the minimal polynomial
of the action of~$g$ does not divide $(X-1)^{p-1}$. But it has to divide
$(X-1)^p = X^p - 1$. So $(X-1)^p$ is the minimal polynomial.
This is the reformulation of~(PS)\@.
\end{proof}

\begin{proof}[Proof of Theorem~\ref{thm:main}]
Suppose first that no $F$-module for $G$ satisfies $(PS)$, and
that $S/\mathfrak{X}(S) \cong G$. Let us prove Oliver's Conjecture for~$G$.
By Lemma~\ref{lemma:dxxplyRooted} the induced action of $G$ on
$V := \Omega_1 (Z(\mathfrak{X}(S)))$ satisfies~(PS), so by assumption there
are no offending subgroups.

Let $E \leq S$ be an elementary abelian subgroup not contained
in~$\mathfrak{X}(S)$. It suffices for us to show that $\mathfrak{X}(S)$
contains an elementary abelian of greater rank than~$E$.
We can split $E$ up as $E = E_1 \times E_2 \times E_3$, with
$E_1 = E \cap V \leq V^E$ and
$E_1 \times E_2 = E \cap \mathfrak{X}(S)$. By assumption, $1 \neq E_3$
embeds in $S/\mathfrak{X}(S) \cong G$. As there are no offenders,
we have $\dim(V) - \dim(V^{E_3}) > \rank(E_3)$.
But $V^{E_3} = V^E$.
So $V \times E_2$ lies in $\mathfrak{X}(S)$ and has greater rank
than~$E$.

Conversely suppose that the $\f G$-module $V$ is an $F$-module and
satisfies~(PS)\@.
Set $S$ to be the semidirect product $S = V \rtimes G$
defined by this action. From Lemma~\ref{lemma:semidirect} below we see
that $V = \mathfrak{X}(S)$.
As $V$~is an $F$-module, there is an offender:
an elementary abelian subgroup
$1 \neq E \leq G$ with $\dim(V) - \dim(V^E) \leq \rank(E)$. This means
that $W := V^E \times E$ is an elementary abelian subgroup which does not
lie in $V = \mathfrak{X}(S)$ but does have rank at least as great as that
of~$\mathfrak{X}(S)$. So $W \leq J(S)$ and
therefore $J(S) \nleq \mathfrak{X}(S)$.
\end{proof}

\begin{lemma}
\label{lemma:semidirect}
Suppose that $V$ is an $\f G$-module which satisfies~(PS)\@.
Let $S$ be the semidirect product $S = V \rtimes G$
defined by this action. Then $V = \mathfrak{X}(S)$.
\end{lemma}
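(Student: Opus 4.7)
The plan is to prove the two inclusions $V \leq \mathfrak{X}(S)$ and $\mathfrak{X}(S) \leq V$ separately. A preliminary observation I will use throughout is that (PS) forces $V$ to be a faithful $\f G$-module: each order-$p$ element $z \in \Omega_1(Z(G))$ acts on $V$ with minimal polynomial $(X-1)^p$, so $C_G(V) \cap \Omega_1(Z(G)) = 1$; since a nontrivial normal subgroup of the $p$-group $G$ must meet $\Omega_1(Z(G))$, this gives $C_G(V) = 1$. A short direct computation in the semidirect product then yields $C_S(V) = V$, and moreover conjugation of $V$ by an arbitrary element $(w,g) \in S$ coincides with the $G$-action by $g$.

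For the inclusion $V \leq \mathfrak{X}(S)$, I would exhibit a $Q$-series leading up to $V$ using the socle filtration
\[
W_i = \{v \in V : I^i v = 0\}, \quad i \geq 0,
\]
where $I \leq \f G$ is the augmentation ideal. Each $W_i$ is a $G$-submodule, hence normal in $S$, and $0 = W_0 \leq W_1 \leq \cdots \leq W_n = V$ since $G$ is a $p$-group so $I$ acts nilpotently on $V$. To verify the $Q$-condition at step $i$, note that any $h \in \Omega_1(C_S(W_{i-1}))$ has $G$-component $g$ acting trivially on $W_{i-1}$. By definition of the socle $(g-1)W_i \subseteq W_{i-1}$ holds for every $g \in G$, while $(g-1)W_{i-1} = 0$ by the choice of $h$. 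Thus $(g-1)^2 W_i = 0$, so $[W_i, \Omega_1(C_S(W_{i-1})); 2] = 1$, and since $p \geq 3$ this gives the required $[W_i, \Omega_1(C_S(W_{i-1})); p-1] = 1$.

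For the reverse inclusion $\mathfrak{X}(S) \leq V$ I would argue by contradiction. If $V < \mathfrak{X}(S)$, then combining the $Q$-series for $V$ with a $Q$-series for $\mathfrak{X}(S)$ via Lemma~\ref{lemma:oliver} produces a $Q$-series $1 = Q_0 \leq \cdots \leq Q_m = V < Q_{m+1} \leq \cdots \leq Q_n = \mathfrak{X}(S)$. The step from $V$ demands $[\Omega_1(C_S(V)), Q_{m+1}; p-1] = 1$; since $C_S(V) = V$ is elementary abelian, this reads $[V, Q_{m+1}; p-1] = 1$. Now $Q_{m+1}/V$ is a nontrivial normal subgroup of the $p$-group $G = S/V$, so it meets $\Omega_1(Z(G))$ nontrivially; lift any nontrivial $\bar g$ in the intersection to $g \in Q_{m+1}$. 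Then $[V, g; p-1]$ equals $(\bar g - 1)^{p-1} V$, which is nonzero by (PS). This contradiction forces $\mathfrak{X}(S) = V$. The main obstacle is setting up the socle filtration so that the two-step annihilation $(g-1)^2 W_i = 0$ can be harnessed at every step; once this is arranged, the odd prime hypothesis $p-1 \geq 2$ is precisely what makes the $(p-1)$-fold commutator vanish.
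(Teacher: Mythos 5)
Your proof is correct and follows essentially the same strategy as the paper's: establish that $V$ is self-centralizing in~$S$, show $V \leq \mathfrak{X}(S)$ by exhibiting a $Q$-series, and rule out $V < \mathfrak{X}(S)$ by extracting from any strictly larger normal subgroup an element whose image in $G$ is a nontrivial element of $\Omega_1(Z(G))$, which by~(PS) acts with minimal polynomial $(X-1)^p$ and so contradicts $[V,Q;p-1]=1$. There are two minor deviations, both sound. First, you obtain $C_S(V)=V$ by a direct computation from faithfulness (which (PS) does imply), whereas the paper first shows $V$ is a maximal normal abelian subgroup of~$S$ — using the same (PS) argument — and then deduces that it is centric. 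Second, your socle filtration for the $Q$-series leading up to~$V$ is more elaborate than necessary: for $p>2$ the one-step series $1 < V$ already works, since $V$ is normal and abelian, so $[\Omega_1(S),V;2] \leq [V,V]=1$, and this is exactly the shortcut the paper takes. Neither difference affects correctness; the filtration simply does extra work that the normal-abelian observation renders superfluous.
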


\begin{proof}
First we prove that $V$ is a maximal normal abelian subgroup
of~$S$: clearly it is abelian and normal. If $A$
is a normal abelian subgroup strictly containing $V$,
then $A = V \rtimes H$ for some nontrivial abelian $H \normal G$. As
$H$ is nontrivial and normal it contains an order~$p$ element $g$~of $Z(G)$.
Since $V$~satisfies (PS), it follows that $g$~acts on~$V$ with minimal
polynomial $(X-1)^p$. But that is a contradiction, as $A$ is abelian. So
$V$ is indeed maximal normal abelian.

We now argue as in the proof of Oliver's Lemma~3.2\@. Since $V$ is maximal
normal abelian, it is centric in~$S$: for if not then
$V < C_S(V) \normal S$, and so $C_S(V)/V$ has nontrivial
intersection with the centre of $S/V$. Picking an $x \in C_S(V)$ whose image
in $C_S(V)/V$ is a nontrivial element of this intersection, we obtain a
strictly larger normal abelian subgroup $\langle V, x\rangle$,
a contradiction. Hence $\Omega_1 C_S(V)=V$.

Moreover, since $V$ is normal abelian and $p > 2$, there is a $Q$-series
$1 < V$. So by Lemma~\ref{lemma:oliver} there is a $Q$-series leading
up to~$\mathfrak{X}(S)$ with $Q_1 = V$.
If $V < \mathfrak{X}(S)$ then there is $Q_1 < Q_2 \normal S$
with $[V,Q_2;p-1]=1$. But this cannot happen, because by the argument
of the first paragraph of this proof there is a $g \in Q_2$ whose action
on~$V$ has minimal polynomial $(X-1)^p$. So $V = \mathfrak{X}(S)$.
\end{proof}

\begin{proof}[Proof of Corollary~\ref{coroll:main}]
Immediate from Theorem~\ref{thm:main}\@.
If $\mathfrak{X}(S) = S$ then Oliver's Conjecture holds automatically.
\end{proof}

\section{The Replacement Theorem}
\label{sect:Replacement}
\noindent
We shall need the following lemma,
which is a special case of the
Replacement Theorem and its proof in~\cite[X, 3.3]{BlackburnHuppert:III}\@.

\begin{lemma}
\label{lemma:Replacement}
Suppose that $G \neq 1$ is elementary abelian, that $V$ is a faithful
$\f G$-module, and that $G$ contains no quadratic elements. Let us write
\[
T = \{(H,W) \mid \text{$H \leq G$ and $W$ is a subspace of $V^H$}\} \, .
\]
Suppose that $(H,W) \in T$ with $H \neq 1$. Then there is $(K,U) \in T$
with $K < H$, $W \subsetneq U \subsetneq V$ and
$\left|H \times W\right| = \left|K \times U\right|$.
\end{lemma}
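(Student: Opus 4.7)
The plan is to find a subgroup $K$ of index $p$ in $H$ and a vector $v\in V^K\setminus W$ such that $W+\langle v\rangle\subsetneq V$; the pair $(K,U)$ with $U=W+\langle v\rangle$ then satisfies $|K|\cdot|U|=(|H|/p)(p|W|)=|H|\cdot|W|$, so the only real content is to locate $v$.

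If $V^H$ strictly contains $W$, any hyperplane $K<H$ and any $v\in V^H\setminus W$ do the job: $U\le V^H\subsetneq V$ by faithfulness of the $H$-action on~$V$.

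The interesting sub-case is $W=V^H$, where one must exhibit a hyperplane $K<H$ with $V^K\supsetneq V^H$. For $\rank(H)=1$ one takes $K=1$, so $V^K=V$, and the non-quadratic hypothesis applied to a generator of $H$ gives $\dim V^H\le\dim V-2$, ensuring that $U=V^H+\langle v\rangle$ remains a proper subspace of $V$ for any $v\in V\setminus V^H$. For $\rank(H)\ge 2$, I apply the $p$-group fixed-point theorem to the nonzero $\f$-vector space $V/V^H$ (which is nonzero since $H$ acts faithfully and non-trivially), obtaining $v\in V\setminus V^H$ with $(h-1)v\in V^H$ for every $h\in H$. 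The map $\alpha\colon H\to V^H$ defined by $\alpha(h)=(h-1)v$ is then $\f$-linear (since $H$ is abelian and acts trivially on $V^H$) and nonzero (for otherwise $v\in V^H$), so its kernel $K_0$ is a proper subgroup of $H$ with $v\in V^{K_0}\setminus V^H$; hence $V^{K_0}\supsetneq V^H$.

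The main obstacle I anticipate is that $K_0$ may have codimension greater than one in $H$, i.e., may fail to be a hyperplane. Two approaches suggest themselves: either vary $v$ within the $H$-invariant subspace $V_0=\{u\in V\colon(h-1)u\in V^H\ \forall h\in H\}$---on which $H$ acts through translations in $V^H$---so as to arrange that the image of $\alpha$ is one-dimensional, making $K_0$ a hyperplane; or else work with $K=K_0$ directly and take $U\le V^{K_0}$ of dimension $\dim W+\log_p[H:K_0]$. The latter requires the estimate $\dim V^{K_0}\ge\dim V^H+\log_p[H:K_0]$, coming from the Jordan structure of non-quadratic elements of $H$ acting on $V$; this bookkeeping is the technical heart of the Replacement Theorem in Huppert--Blackburn~X.3.3 and is what makes the lemma a direct consequence of it.
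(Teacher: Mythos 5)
Your first two cases are sound: when $W \subsetneq V^H$ any index-$p$ subgroup $K$ together with $U = W + \langle v\rangle$, $v \in V^H\setminus W$, works, and when $W = V^H$ with $\rank(H)=1$ the Jordan-block count for a non-quadratic generator gives $\dim V \geq \dim V^H + 2$ as you say. But the main case ($W = V^H$, $\rank(H)\geq 2$) is left with exactly the hole you yourself identify, and neither proposed repair goes through as stated. The difficulty is manufactured by your opening decision to force $[H:K]=p$ and $\dim U = \dim W + 1$: the subgroup $K_0 = \ker(h \mapsto (h-1)v)$ can have index $p^r$ with $r \geq 2$, and adjoining the single vector $v$ buys only one extra dimension. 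Repair (a) -- varying $v$ so that $\alpha_v$ has one-dimensional image -- is not available in general: the maps $\alpha_v$ form a subspace of $\Hom(H,V^H)$ isomorphic to $V_0/V^H$, and a nonzero subspace of homomorphisms need not contain a rank-one element (it may even be one-dimensional, spanned by a single $\alpha_v$ of rank $\geq 2$). Repair (b) needs the estimate $\dim V^{K_0} \geq \dim V^H + \log_p[H:K_0]$, but your construction only exhibits $V^H + \langle v\rangle \subseteq V^{K_0}$, i.e.\ one extra dimension; deferring the rest to Huppert--Blackburn X.3.3 is deferring precisely the content of the lemma you are asked to prove.

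The paper's proof sidesteps the issue by never insisting that $r=1$. It sets $I=\{v \in V : (h-1)v \in W \text{ for all } h\}$ and $J=\{v \in V: (h-1)v \in I \text{ for all } h\}$; non-quadraticity gives $I \subsetneq V$, whence $W \subsetneq I \subsetneq J$ by two applications of the fixed-point argument. It then picks $v_0 \in J\setminus I$ and takes $U = W + \langle (h-1)v_0 : h \in H\rangle$ and $K = \{h : (h-1)v_0 \in W\}$. Since $h \mapsto (h-1)v_0 \bmod W$ is a homomorphism, $\dim U - \dim W$ and $\log_p\left|H:K\right|$ are automatically the \emph{same} integer $r$, whatever its value; $U \subseteq I \subsetneq V$; and $U \subseteq V^K$ because $(h-1)(k-1)v_0 = 0$ for $k \in K$, as $(k-1)v_0 \in W \subseteq V^H$. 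Adjoining the translates $(h-1)v_0$ rather than the vector $v_0$ itself is the idea your argument is missing; with it, the awkward case split disappears entirely.
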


\begin{proof}
Let us set
$I = \{v \in V \mid \text{$(h-1)v \in W$ for every $h \in H$}\}$ and
$J = \{v \in V \mid \text{$(h-1)v \in I$ for every $h \in H$}\}$.
If $1 \neq h \in H$ then $(h-1)^2 v \neq 0$ for some $v \in V$.
Then $v \not \in I$, for otherwise $(h-1)v \in W$ and so $(h-1)^2 v = 0$.
So $I \subsetneq V$, and therefore $W \subsetneq I \subsetneq J$ by the usual
orbit length argument. Pick
$v_0 \in J \setminus I$ and set $U$~to be the subspace spanned
by $W$~and $\{(h-1)v_0 \mid h \in H\}$. Set $K = \{h \in H \mid
(h-1)v_0 \in W\}$.
So $U \supsetneq W$ by choice of~$v_0$. Also $U \subseteq I \subsetneq V$.
If $h,h' \in H$ then
$(hh'-1)v_0 = (h-1)v_0 + (h'-1)v_0 + (h-1)(h'-1)v_0$, and so
\begin{equation}
\label{eqn:Replacement}
(hh'-1)v_0 \equiv (h-1)v_0 + (h'-1)v_0 \pmod{W} \, .
\end{equation}
So $K \leq H$, and in fact $K < H$ by choice of~$v_0$.
By Eqn.~\eqref{eqn:Replacement} it also follows that $\left|H:K\right| = p^r$
for $r = \dim U - \dim W$.
Finally $U \subseteq V^K$, for if $k \in K$ and $u \in U$,
then
\[
u = \sum_{h \in H} \lambda_h (h-1)v_0 + w
\]
for suitable $\lambda_h \in \f$, $w \in W$. So
\[
(k-1)u = \sum_{h \in H} \lambda_h (h-1)(k-1) v_0 = 0 \, ,
\]
since $(k-1)v_0 \in W \subseteq V^H$.
\end{proof}

\begin{corollary}
\label{coroll:Replacement}
Suppose as in Lemma~\ref{lemma:Replacement} that $(H,W) \in T$
and $H \neq 1$. Then $\left|H \times W\right| < \left|V\right|$.
\end{corollary}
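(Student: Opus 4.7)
The plan is a straightforward induction on $\abs{H}$, with the Replacement Lemma doing all the work. The key observation is that Lemma~\ref{lemma:Replacement} lets us trade a pair $(H,W)$ for a pair $(K,U)$ of equal ``weight'' $\abs{H \times W} = \abs{K \times U}$ but with strictly smaller first coordinate, and crucially with $U \subsetneq V$.

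In detail, I would argue as follows. Since $G$ is elementary abelian, $\abs{H}$ is a power of~$p$, and we induct on this power. For the base case $\abs{H} = p$, apply Lemma~\ref{lemma:Replacement} to obtain $(K,U) \in T$ with $K < H$, forcing $K = 1$, and with $U \subsetneq V$. Then
\[
\abs{H \times W} = \abs{K \times U} = \abs{U} < \abs{V} \, .
\]
For the inductive step, assume the result holds for all $H' < H$ with $H' \neq 1$, and again apply Lemma~\ref{lemma:Replacement} to obtain $(K,U) \in T$ with $K < H$, $U \subsetneq V$, and $\abs{H \times W} = \abs{K \times U}$. If $K = 1$, then $\abs{H \times W} = \abs{U} < \abs{V}$ directly. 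Otherwise $K \neq 1$, and by the inductive hypothesis $\abs{K \times U} < \abs{V}$, whence $\abs{H \times W} < \abs{V}$.

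There is no serious obstacle here: once the Replacement Lemma is in hand, the corollary is an immediate iteration. The only minor care needed is to note that the ``$U \subsetneq V$'' part of the Lemma's conclusion provides the base of the induction when the process terminates with $K = 1$, since a pair $(1,U)$ with $U = V$ would give $\abs{K \times U} = \abs{V}$ and ruin the strict inequality.
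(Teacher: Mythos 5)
Your proof is correct and is essentially the paper's own argument: an induction on $\abs{H}$ using Lemma~\ref{lemma:Replacement} to reduce $\abs{H}$ while preserving $\abs{H\times W}$, with the strict inclusion $U\subsetneq V$ supplying the inequality when the process terminates at $K=1$. No differences worth noting.
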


\begin{proof}
By induction on~$\left|H\right|$. By the lemma we may reduce $\left|H\right|$
whilst keeping $\left|H \times W\right|$ constant. This process only stops
when we arrive at $(K,U)$ with $K=1$. But $U \subsetneq V$ by the lemma.
\end{proof}

\noindent
The following result is presumably well known to those familiar with
Thompson factorization.

\begin{theorem}
\label{thm:serious}
Suppose that $p$~is an odd prime, $G$ is a finite group,
$V$ is a faithful $\f G$-module, and $E \leq G$ is a non-identity
elementary abelian $p$-subgroup.
If $E$ is an offender, then it must contain a quadratic element.
\end{theorem}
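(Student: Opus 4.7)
The plan is to prove the contrapositive directly from Corollary~\ref{coroll:Replacement}: if $E \neq 1$ is an elementary abelian $p$-subgroup of $G$ which contains no quadratic element, then the strict inequality $|E|\cdot|C_V(E)| < |V|$ holds, so $E$ cannot be an offender.

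To set this up, I would restrict attention from $G$ to the subgroup $E$ acting on $V$. The two hypotheses needed to invoke the Replacement lemma transfer cleanly: first, because $E \leq G$ and $V$ is faithful for $G$, the restriction $V|_E$ remains a faithful $\f E$-module; second, the assumption that $E$ has no quadratic element is precisely the remaining hypothesis. Note that being quadratic is an intrinsic property of an element acting on $V$, so this notion is unchanged when passing from $G$ to $E$.

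With Lemma~\ref{lemma:Replacement} thus applicable to the elementary abelian group $E$ on $V$, I would apply Corollary~\ref{coroll:Replacement} to the pair $(H,W) = (E, V^E)$; this is a legitimate member of the set $T$ since trivially $W = V^E \subseteq V^H$. The conclusion of the corollary gives
\[
|E| \cdot |C_V(E)| \;=\; |E \times V^E| \;<\; |V|,
\]
contradicting the offender inequality $|E|\cdot|C_V(E)| \geq |V|$. Hence $E$ must contain a quadratic element.

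There is essentially no obstacle here: the entire content is packaged in the Replacement corollary, and the proof of Theorem~\ref{thm:serious} amounts to a one-step reduction. (The hypothesis that $p$ is odd does not appear to be used in this deduction; it is the Replacement machinery of the preceding section that does all the work.)
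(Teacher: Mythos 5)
Your proof is correct and is essentially identical to the paper's, which also reduces to the case $E=G$ and applies Corollary~\ref{coroll:Replacement} to the pair $(E,V^E)$. Your parenthetical observation is also accurate: the oddness of $p$ is not used in this deduction (though for $p=2$ every involution is automatically quadratic, so the statement would be vacuous there).
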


\begin{proof}
Without loss of generality $E = G$.
Apply Corollary~\ref{coroll:Replacement} to the pair $(G,V^G) \in T$.
\end{proof}

\begin{rk}
Pursuing this direction further, it might be worthwhile to investigate
potential applications of the $P(G,V)$-theorem in the theory of $p$-local
finite groups.  The properties of the Thompson subgroup $J(S)$ which
Chermak describes in his comments on the motivation for
the $P(G,V)$-theorem \cite[Rk 2]{Chermak:QuadraticAction}
are the same properties which led to $J(S)$ featuring in Oliver's conjecture.
And Timmesfeld's replacement theorem plays an important part in the proof of
the $P(G,V)$-theorem.
\end{rk}

\section{Nilpotence class at most two}
\label{sect:partial}
\noindent
We can now start work on the proof of Theorem~\ref{thm:partial}\@.

\begin{lemma}
\label{lemma:Class2}
Suppose that $p$ is an odd prime, that $G\neq1$ is a finite $p$-group, and
that $V$~is a faithful $\f G$-module.
Suppose that $A,B \in G$ are such that $C := [A,B]$ is a nontrivial element
of $C_G(A,B)$. If $C$ is non-quadratic, then so are $A$~and $B$.
\end{lemma}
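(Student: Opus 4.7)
The plan is to prove the contrapositive: if $A$ is quadratic, then so is $C$; and analogously for $B$. Write $a := A - I$, $b := B - I$, $c := C - I$ as operators on $V$. The hypothesis $a^2 = 0$ gives $A^{\pm 1} = I \pm a$, while centrality of $C$ means $[a,c] = [b,c] = 0$, and $AB = BAC$ rephrases as $ab - ba = BAc$.

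My main tool is the inner automorphism $\alpha\colon \End V \to \End V$, $X \mapsto AXA^{-1}$. From $ABA^{-1} = BC$ and $ACA^{-1} = C$ one reads off $\alpha(B) = BC$ and $\alpha(c) = c$, and a short induction then yields $(\alpha - I)^n(B) = Bc^n$ for every $n \geq 0$. Independently, $a^2 = 0$ makes $A^{\pm 2} = I \pm 2a$, and a direct expansion of $\alpha^2 - 2\alpha + I$ gives $(\alpha - I)^2(X) = -2\,aXa$ for every $X \in \End V$. Setting $X = B$, reducing $aBa$ to $aba$ via $a^2 = 0$, and then using $ab - ba = BAc$ to rewrite $aba = BAca$, I arrive at the identity
\[
c^2 \;=\; -2\,ac,
\]
from which $c^3 = 0$ and $ac^2 = 0$ follow immediately.

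This identity by itself is not yet enough, because it only shows $c^2 \in (A-I)V$. The decisive step is to repeat the derivation after replacing $B$ by $B^{-1}$: in the class-two situation $[A, B^{-1}] = C^{-1}$, which is still central in $\langle A, B\rangle$ and commutes with both $A$ and $B^{-1}$, so the identical argument produces
\[
(C^{-1} - I)^2 \;=\; -2\,a\,(C^{-1} - I).
\]
Using $c^3 = 0$ to expand $C^{-1} - I = -c + c^2$, the left-hand side simplifies to $c^2$ and the right-hand side (after invoking $ac^2 = 0$) becomes $+2\,ac$. Combining with $c^2 = -2\,ac$ gives $4\,ac = 0$, and since $p$ is odd I conclude $ac = 0$ and therefore $c^2 = 0$. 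The assertion for $B$ follows from the symmetric argument with the roles of $A$ and $B$ swapped, which merely replaces $C$ by $C^{-1}$ throughout.

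I expect the main obstacle to be the temptation to regard the identity $c^2 = -2\,ac$, obtained directly from $\alpha$, as essentially the goal; it is not, since on its own it does not force $c^2 = 0$. The critical insight is that running the same derivation with $B^{-1}$ produces the sign-flipped twin identity $c^2 = +2\,ac$, whose combination with the first relation uses the hypothesis $p \neq 2$ to eliminate $ac$. Once that observation is in hand, everything else reduces to short symbolic manipulation.
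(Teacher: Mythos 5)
Your proof is correct, and it takes a genuinely different route from the one in the paper. The paper's proof places the quadratic hypothesis on $B$ and argues by contradiction: writing $\alpha,\beta,\gamma$ for the actions of $A-1$, $B-1$, $C-1$, it multiplies the relation $\alpha\beta-\beta\alpha=\gamma(1+\beta+\alpha+\beta\alpha)$ by $\beta$ on each side to obtain $\gamma(2\beta+\beta\alpha+\alpha\beta)=0$, then eliminates $\delta:=\gamma\beta$ through the iterated identity $\delta=\lambda^{r}\sum_{s}\binom{r}{s}\alpha^{s}\delta\alpha^{r-s}$ with $\lambda=-\tfrac12$, using that $\alpha$ is nilpotent because $A$ has $p$-power order; finally $\gamma^{2}(1+\alpha)=0$ forces $\gamma^{2}=0$. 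You instead place the hypothesis on $A$ and work with the inner automorphism $\operatorname{Ad}(A)$: the identity $(\operatorname{Ad}(A)-I)^{2}(X)=-2aXa$, valid once $a^{2}=0$, applied to $B$ gives $c^{2}=-2ac$, and applied to $B^{-1}$ (for which $[A,B^{-1}]=C^{-1}$ still satisfies the hypotheses) gives the sign-flipped relation $c^{2}=+2ac$; comparing the two and dividing by $4$ yields $c^{2}=0$ outright. I checked the individual steps --- $\alpha(B)=BC$, hence $(\alpha-I)^{2}(B)=Bc^{2}$; $aBa=aba=BAca=Bac$; and the expansion $C^{-1}-I=-c+c^{2}$ modulo $c^{3}=0$ --- and they are all valid. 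Both proofs invoke the oddness of $p$ only to invert $2$, and both exploit the symmetry $[B,A]=C^{-1}$ to reduce to one of the two elements; what yours buys is the elimination of the binomial induction and of any appeal to nilpotence of $\alpha$ beyond $a^{2}=0$ (in particular you never use that $A$ has $p$-power order), at the cost of running the computation twice, once for $B$ and once for $B^{-1}$.
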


\begin{proof}
By symmetry it suffices to prove that $B$ is non-quadratic. So suppose
that $B$~is quadratic.
Denote by $\alpha,\beta,\gamma$ the action matrices on~$V$ of
$A-1$, $B-1$ and $C-1$ respectively.

By assumption we have
$\gamma^2 \neq 0$ and $\beta^2 = 0$.
As $C$~commutes with $A$~and $B$, we have
$\alpha \gamma = \gamma \alpha$ and $\beta \gamma = \gamma \beta$.
Since $[A,B]=C$, we have $AB = BAC$ and therefore
\begin{equation}
\label{eqn:Class2}
\alpha \beta - \beta \alpha = \gamma (1 + \beta + \alpha + \beta \alpha) \, .
\end{equation}
Evaluating $\beta \cdot \text{Eqn.~\eqref{eqn:Class2}} \cdot \beta$,
we deduce that
$\gamma \beta \alpha \beta = 0$.
So when we evaluate $\beta \cdot \text{Eqn.~\eqref{eqn:Class2}}
+ \text{Eqn.~\eqref{eqn:Class2}} \cdot \beta$, we find that
$\gamma(2\beta + \beta \alpha + \alpha \beta) = 0$. Let us
write $\lambda = -\frac12$ and $\delta = \gamma \beta$. Then we have
\[
\delta = \lambda(\delta \alpha + \alpha \delta) \, .
\]
From this one sees by induction upon $r \geq 1$ that
\[
\delta = \lambda^r \sum_{s = 0}^r \binom{r}{s} \alpha^s \delta \alpha^{r-s} \, .
\]
As $A$ has order a power of~$p$, it follows that $(A-1)$ and its action
matrix~$\alpha$ are nilpotent. From this we deduce that $\delta = 0$,
that is $\gamma \beta = 0$.
Applying this to $\gamma \cdot \text{Eqn.~\eqref{eqn:Class2}}$ we see that
$\gamma^2 (1 + \alpha) = 0$. As $\alpha$~is nilpotent it follows
that $\gamma^2 = 0$, a contradiction. So $\beta^2 \neq 0$ after all.
\end{proof}

\begin{proof}[Proof of Theorem~\ref{thm:Class2}]
We suppose that $\Omega_1(Z(G))$ has no quadratic elements, and show
that $G$ has none either.
Suppose $1 \neq B \in Z(G)$. Then is an $r \geq 0$ with
$1 \neq B^{p^r} \in \Omega_1(Z(G))$. So $B^{p^r}$ is not quadratic.
Hence
$(B-1)^{2p^r} = (B^{p^r}-1)^2$ has nonzero action. So $(B-1)^2$
has nonzero action, and $Z(G)$ contains no quadratic elements.

If $B \not \in Z(G)$ then the nilpotency class is two and there is an element
$A \in G$ with $1 \neq [A,B] \in Z(G)$. So $(B-1)^2$ has nonzero action by
Lemma~\ref{lemma:Class2}\@.
\end{proof}

\begin{corollary}
\label{coroll:manClass2}
Suppose that $p$ is an odd prime, $G\neq1$ a finite $p$-group and
$V$ an $\f G$-module which satisfies~(PS)\@. If the nilpotence class of~$G$
is at most two then $V$~cannot be an $F$-module.
\end{corollary}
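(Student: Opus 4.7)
The plan is to argue by contradiction, combining Theorems~\ref{thm:serious} and~\ref{thm:Class2} with the defining property~(PS)\@. So suppose that $V$ satisfies (PS) and is also an $F$-module; I want to derive a contradiction.

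First I would verify that $V$ is automatically a \emph{faithful} $\f G$-module, which is needed before invoking either prior theorem. By (PS), each nontrivial element $z \in \Omega_1(Z(G))$ acts on $V$ with minimal polynomial $(X-1)^p$ and in particular acts nontrivially; since $G$ is a nontrivial $p$-group, every nontrivial normal subgroup of $G$ meets $\Omega_1(Z(G))$, so the kernel of the $G$-action on $V$ must be trivial.

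Because $V$ is now a faithful $F$-module, the $F$-module definition supplies an offending non-identity elementary abelian subgroup $E \leq G$. Theorem~\ref{thm:serious} then produces a quadratic element inside $E$, and hence in $G$. Since $G$ has nilpotence class at most two and $V$ is faithful, Theorem~\ref{thm:Class2} applies and yields a quadratic element $h \in \Omega_1(Z(G))$; that is, $1 \neq h$ with $(h-1)^2 V = 0$.

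The final step is to contradict this against (PS)\@. As $h$ is a nontrivial central order-$p$ element, (PS) forces the minimal polynomial of $h$ on $V$ to be exactly $(X-1)^p$, and since $p \geq 3$ this is incompatible with $(h-1)^2 V = 0$. Hence no such $F$-module exists, proving the corollary. The argument is essentially bookkeeping given the two prior theorems, so I do not anticipate any substantial obstacle; the only point to be careful about is confirming the faithfulness of $V$ before feeding it into Theorem~\ref{thm:serious}, and checking that ``$p$ odd'' is actually used in the last line (it is, through $p \geq 3 > 2$).
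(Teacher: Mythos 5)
Your proposal is correct and is essentially the paper's own proof run in the contrapositive direction: the paper argues (PS) $\Rightarrow$ no quadratic elements in $\Omega_1(Z(G))$ $\Rightarrow$ (Theorem~\ref{thm:Class2}) none in $G$ $\Rightarrow$ (Theorem~\ref{thm:serious}) no offenders, while you assume an offender and chase the same implications to a contradiction. The faithfulness check you include is a sensible extra precaution, already covered by the paper's earlier remark that faithfulness over $G$ reduces to faithfulness over $\Omega_1(Z(G))$.
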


\begin{proof}
As $p$ is odd, condition (PS) means that there are no quadratic
elements in $\Omega_1(Z(G))$. Then Theorem~\ref{thm:Class2} says
that there are no quadratic elements in~$G$.
So by Theorem~\ref{thm:serious} there are no offenders.
\end{proof}

\begin{proof}[Proof of Theorem~\ref{thm:partial}]
Follows from 
Corollary~\ref{coroll:manClass2}
and
Theorem~\ref{thm:main}
if $\mathfrak{X}(S) < S$.
If $\mathfrak{X}(S)=S$ then there is nothing to prove.
\end{proof}

\section{A class 3 example}
\label{section:class3}
\noindent
Theorem~\ref{thm:Class2} was a key step in the proof of
Theorem~\ref{thm:partial}\@. We now give an example which shows that
Theorem~\ref{thm:Class2} does not apply to groups of nilpotence class
three.

Let $G$ be the semidirect product $G = K \rtimes L$, where
the $K = \f[3]^3$ is elementary abelian of order~$3^3$,
$L = \langle A \rangle$ is cyclic of order~$3$, and the action of $L$~on
$v \in K$ is given by
\[
A v A^{-1} = \begin{pmatrix} 1 & 1 & 0 \\ 0 & 1 & 1 \\ 0 & 0 & 1 \end{pmatrix}
\cdot v
\, .
\]
Observe that $G$~is isomorphic to the wreath product $C_3 \wr C_3$, as the
action of~$A$ permutes the following basis of~$K$ cyclically:
$(0,0,1)$, $(0,1,1)$, $(1,2,1)$.

Setting $B=(0,0,1)$, $C=(0,1,0)$ and $D=(1,0,0)$ we obtain the following
presentation of~$G$, where we take $[A,B]$ to mean $ABA^{-1}B^{-1}$.
\[
G = \biggl\langle A,B,C,D \biggm| \parbox{200pt}{$A^3=B^3=C^3=D^3=1$, \quad
$D$ central, \\
$[B,C]=1$, \quad $[A,B]=C$, \quad $[A,C]=D$} \biggr\rangle \, ,
\]
From this we deduce that matrices
$\alpha,\beta,\gamma,\delta \in M_n(\f[3])$ induce a representation
$\rho \colon G \rightarrow \GL_n(\f[3])$ with
\begin{xalignat*}{4}
\rho(A) & = 1 + \alpha &
\rho(B) & = 1 + \beta &
\rho(C) & = 1 + \gamma &
\rho(D) & = 1 + \delta
\end{xalignat*}
if and only if the following relations are satisfied, where $[\alpha, \beta]$
now of course means $\alpha \beta - \beta \alpha$:
\begin{equation}
\label{eqn:alpharep}
\begin{aligned}
\alpha^3 = \beta^3 & = \gamma^3 = \delta^3 = 0 \\
[\alpha,\delta] = [\beta,\delta] & = [\gamma,\delta] = [\beta,\gamma] = 0 \\
[\alpha, \beta] = \gamma(1+\beta)(1+\alpha)
& \hspace*{3em}
[\alpha, \gamma] = \delta(1+\gamma)(1+\alpha)
\end{aligned}
\end{equation}
Now we consider what it means for such a representation to satisfy~(PS)\@.
Here, $Z(G)=\langle D\rangle$ is cyclic of order~$3$. So
we need both $(\rho(D)-1)^2$ and $(\rho(D^2) - 1)^2$ to be non-zero.
That is, $\delta^2$ and $(\delta^2 + 2\delta)^2 = \delta^2(1+\delta+\delta^2)$
should both be nonzero. But $1+\delta+\delta^2$ is invertible, since
$\delta$~is nilpotent.

We deduce therefore that matrices $\alpha,\beta,\gamma,\delta \in \GL_n(\f[3])$
induce a representation of~$G$ satisfying~(PS) if and only if they satisfy
the inequality
\begin{equation}
\label{eqn:deltadeep}
\delta^2 \neq 0
\end{equation}
in addition to the equations~\eqref{eqn:alpharep}\@.

Using GAP~\cite{GAP4} we obtained the
the following matrices in $\GL_8(\f[3])$. The reader is invited to
check\footnote{See
\texttt{http://www.minet.uni-jena.de/\~{}green/Documents/matTest.g}
for a GAP script which performs these checks.}
that they satisfy the
relations \eqref{eqn:alpharep}~and \eqref{eqn:deltadeep}\@.
\begin{xalignat*}{2}
\delta & =  \begin{pmatrix}
0 & 0 & 1 & 0 & 0 & 0 & 0 & 0 \\
0 & 0 & 0 & 1 & 0 & 0 & 0 & 0 \\
0 & 0 & 0 & 0 & 0 & 0 & 0 & 2 \\
0 & 0 & 0 & 0 & 0 & 0 & 0 & 1 \\
0 & 0 & 0 & 0 & 0 & 0 & 1 & 0 \\
0 & 0 & 0 & 0 & 0 & 0 & 0 & 1 \\
0 & 0 & 0 & 0 & 0 & 0 & 0 & 0 \\
0 & 0 & 0 & 0 & 0 & 0 & 0 & 0
\end{pmatrix}
&
\gamma & = \begin{pmatrix}
0 & 0 & 1 & 1 & 2 & 2 & 2 & 1 \\
0 & 0 & 1 & 0 & 1 & 1 & 0 & 0 \\
0 & 0 & 0 & 0 & 0 & 0 & 2 & 2 \\
0 & 0 & 0 & 0 & 0 & 0 & 1 & 0 \\
0 & 0 & 0 & 0 & 0 & 0 & 1 & 1 \\
0 & 0 & 0 & 0 & 0 & 0 & 1 & 0 \\
0 & 0 & 0 & 0 & 0 & 0 & 0 & 0 \\
0 & 0 & 0 & 0 & 0 & 0 & 0 & 0
\end{pmatrix}
\\
\beta & = \begin{pmatrix}
0 & 0 & 0 & 0 & 1 & 0 & 0 & 0 \\
0 & 0 & 0 & 0 & 0 & 1 & 0 & 0 \\
0 & 0 & 0 & 0 & 0 & 0 & 1 & 0 \\
0 & 0 & 0 & 0 & 0 & 0 & 0 & 1 \\
0 & 0 & 0 & 0 & 0 & 0 & 0 & 0 \\
0 & 0 & 0 & 0 & 0 & 0 & 0 & 0 \\
0 & 0 & 0 & 0 & 0 & 0 & 0 & 0 \\
0 & 0 & 0 & 0 & 0 & 0 & 0 & 0
\end{pmatrix}
&
\alpha & = \begin{pmatrix}
2 & 2 & 0 & 2 & 0 & 1 & 0 & 1 \\
1 & 1 & 2 & 2 & 0 & 0 & 0 & 0 \\
0 & 0 & 2 & 2 & 0 & 0 & 0 & 0 \\
0 & 0 & 1 & 1 & 0 & 0 & 0 & 0 \\
0 & 0 & 1 & 2 & 0 & 0 & 1 & 1 \\
0 & 0 & 1 & 1 & 0 & 0 & 0 & 0 \\
0 & 0 & 0 & 0 & 0 & 0 & 0 & 1 \\
0 & 0 & 0 & 0 & 0 & 0 & 0 & 0
\end{pmatrix}
\end{xalignat*}
Observe that $\beta^2=0$. So although this module satisfies~(PS),
the elementary abelian subgroups $\langle B \rangle$ and
$\langle B,C,D \rangle$ both contain $B$, a quadratic element.
So we must find another way to show that they are not offenders:
Theorem~\ref{thm:serious} does not apply.

\begin{remark}
More generally, we are not currently able to decide Conjecture~\ref{conj:GHL}
either way for the wreath product group $H \wr C_3$, where the group
$H$~on the bottom is an elementary abelian $3$-group.
\end{remark}

\subsection*{Acknowledgements}
We are grateful to George Glauberman for generously sharing his background
knowledge with us, and to the referee for advice concerning
terminology.

\bibliographystyle{abbrv}
\bibliography{../united}



\end{document}